\DeclareSymbolFont{rsfs}{U}{rsfs}{m}{n}
\DeclareSymbolFontAlphabet{\mathrsfs}{rsfs}
\numberwithin{equation}{section}
\newcommand{\simpleenum}[1]{\par\indent{\upshape\bfseries #1}}
\theoremstyle{plain}
\newtheorem{theorem}{Theorem}
\newtheorem{prop}[theorem]{Proposition}
\newtheorem{lemma}[theorem]{Lemma}
\theoremstyle{remark}
\newtheorem*{remark}{Remark}
\def\mlSetRCSInfo#1{\def\mlRCSInfo{#1}}
\newcommand{\schatten}[2]{\mathrsfs{B}^{#1}(#2)}
\newcommand{\BH}[1]{\schatten{#1}{\mathcal{H}} }
\newcommand{\LH}{\BH{}}
\newcommand{\compact}{\mathrsfs{K}}
\newcommand{\bone}{\BH 1}
\newcommand{\detm}[1][m]{\det\nolimits_{#1}}
\newcommand{\logm}[1][m]{\log\nolimits_{#1}}
\newcommand{\detF}{\det\nolimits_F}
\newcommand{\yaj}{y_{\xi,j}}
\newcommand{\yajk}{\yaj^k}
\newcommand\Trb[1]{\Tr\bigl(#1\bigr)}
\newcommand\britz{\cite{BCG21}}
\newcommand{\setr}{\{1,\ldots,r\}}
\newcommand{\setj}{\{1,\ldots,j\}}
\newcommand{\avec}{a_1,\ldots,a_r}
\newcommand{\tvec}{t_1,\ldots,t_r}
\newcommand{\Avec}{A_1,\ldots,A_r}
\newcommand{\mfMjr}{\mfM(j,r)}
\newcommand{\Acomm}{[\cA,\cA]}
\newcommand{\zen}[1]{\cZ#1} 
\newcommand{\Qcs}[1]{\cQ #1 } 
\newcommand{\Qcsp}[1]{\cQ(#1)} 
\newcommand{\QA}{\cQ\cA}
\newcommand{\CA}{\cC\cA}
\newcommand{\ZA}{\cZ\cA}
\newcommand{\PFree}[2]{#1\langle #2 \rangle}
\newcommand{\Qab}{\PFree{\Q}{a,b}}
\newcommand{\Qar}{\PFree{\Q}{a_1,\ldots,a_r}}
\newcommand{\KFreeExpanded}{\PFree{\K}{\avec}}
\newcommand{\KFree}{\cA}
\newcommand{\FPower}[2]{ {#1} \llbracket #2 \rrbracket }
\newcommand{\KFormal}[1]{\KFree\llbracket #1 \rrbracket }
\newcommand{\Att}{ \KFormal t }
\newcommand{\Attn}[2][t]{ \cA_{#2}\llbracket #1 \rrbracket }
\newcommand{\QAtt}[1][t]{ \cQ\cA\llbracket #1 \rrbracket }
\newcommand{\QAttn}[2][t]{ \cQ\cA_{#2}\llbracket #1 \rrbracket }
\newcommand{\ZAtt}{ \cZ\cA\llbracket t \rrbracket }
\newcommand{\ZAttn}[2][t]{ \cZ\cA_{#2}\llbracket #1 \rrbracket }
\newcommand{\CAtt}{ \cC\cA\llbracket t \rrbracket }
\newcommand\trc[1]{\tau_{#1}}
\newcommand\trA{\tau_{\cA}}
\newcommand\trAt[1][t]{\sigma_{\KFormal #1}}
\newcommand{\trC}{\trc{\cA}}
\newcommand{\trCt}{\trc{\cA[t]}}
\newcommand{\Xmr}[1]{X_{m,r}(#1)}
\newcommand{\Xmrr}{X_{m,r}}
\newcommand{\Zmr}[1]{Z_{m,r}(#1)}
\newcommand{\Zmrr}{Z_{m,r}}
\newcommand{\Xtmr}[1]{\widetilde X_{m,r}(#1)}
\newcommand{\Xtmrr}{\widetilde X_{m,r}}
\newcommand{\msum}[3][1]{\sum\limits_{#2=#1}^{#3} }
\newcommand{\mpi}[3][1]{\prod\limits_{#2=#1}^{#3}}
\newcommand{\lncf}[1][j]{\frac{(-1)^{#1-1}}{#1} } 
\newcommand{\lncfpl}[1][j]{\frac{(-1)^j}{j} }
\newcommand{\mfP}{\mathfrak P}
\newcommand{\mfM}{\mathfrak M}
\newcommand{\Nrs}{\N^{r*}}
\begin{document}
\title[Product formula for regularized Fredholm determinants]{%
    The product formula for regularized Fredholm determinants: two new proofs}

\author{Nikolaos Koutsonikos-Kouloumpis}
\address{
Department of Mathematics,
University of Patras,
26504 Patras,
Greece}
\email{up1019669@ac.upatras.gr}

\author{Matthias Lesch}
\address{Mathematisches Institut,
Universit\"at Bonn,
Endenicher Allee 60,
53115 Bonn,
Germany}
\email{lesch@math.uni-bonn.de}
\urladdr{www.math.uni-bonn.de/people/lesch}

\thanks{This paper is based on the first named author's Master's thesis \cite{KK21}
written under the supervision of the second named author.
Both authors gratefully acknowledge the support of the Bonn International
Graduate School (BIGS)}

\makeatletter
\@namedef{subjclassname@2020}{%
    \textup{2020} Mathematics Subject Classification}
\makeatother
\subjclass[2020]{Primary: 47B10; Secondary: 47B02}
\keywords{Trace ideals, regularized Fredholm determinant, determinant
product formula}

\begin{abstract} 
For an $m$-summable operator $A$ in a separable Hilbert space the higher
regularized Fredholm determinant $\detm(I+A)$ generalizes the classical
Fredholm determinant. Recently, Britz et al presented a proof of a product
formula
\[
  \detm\bl (I+A)\cdot(I+B) \br
    = \detm (I+A) \cdot \detm (I+B) \cdot \exp\Trb{X_m(A,B)},
\]
where $X_m(A,B)$ is an explicit polynomial in $A,B$ with values in the
trace class operators. If $m=1$ then $X_1(A,B)=0$, hence the formula
generalizes the classical determinant product formula.

One of the purposes of this note is to present two very simple alternative proofs
of the formula. The first proof is a priori analytic and makes use of 
the fact that $z\mapsto \detm(I+zA)$ is holomorphic, while the second proof
is completely algebraic. The algebraic proof has, in our opinion, 
some interesting aspects in its own about the trace and commutators.

Secondly, we extend the above mentioned formula to several factors
\[
  \detm\Bl \prod_{l=1}^r (I+A_l) \Br
    = \left( \prod_{l=1}^r \detm (I+A_l) \right)
       \cdot \exp\Trb{\Xmr\Avec}.
\]
The latter is more than just a straightforward generalization
as we will gain more insights into the combinatorics behind it. Also
we will present an algebraized version of the analytic proof in the
language of formal power series. 
The upshot is that the two identities are just combinatorial in nature.
\end{abstract}

\maketitle
\tableofcontents


%

	
\section{Introduction}
\label{s:intro}
%
\subsection{Notations and conventions}\label{ss:Not}
We denote by $\N=\{0,1,2,\ldots\}, \Z, \R, \C$ the natural numbers,
integers, real and complex numbers resp.
The cardinality of a set $\xi$ is denoted by $|\xi|$.

In Part \ref{part2} we will also make use of multiindex notation.
For a multiindex $\ga=(\ga_1,\ldots,\ga_r)\in\N^r$ we write $|\ga| = 
\ga_1+\ldots+\ga_r$.

When dealing with noncommuting variables, product notation means 
that the product is taken in the given order,
i.e. $\prod_{l=1}^N x_j = x_1\cldots x_N$ in this order. 

Concerning Hilbert spaces and trace ideals our standard references are
\cite{Sim05} and \cite{Ped89}. Hilbert spaces will be denoted by
calligraphic letters, e.~g.  $\cH$. $\LH, \compact(\cH)$ denote the algebra
of bounded resp. compact linear operators on $\cH$. $\Tr=\Tr_{\cH}$ denotes
the trace on nonnegative operators in $\LH$ resp. on all operators in the
trace ideal $\bone$. For a compact operator $K\in\compact(\cH)$ we adopt
the convention of Simon's book \cite{Sim05} and denote by 
$\gl_n(K), 1\le n\le N\in\N\cup\{\infty\}$, the $n$-th nonzero eigenvalue
(in some enumeration with $|\gl_n|$ in decreasing order, counted with
multiplicity) of $K$. $N$ is a natural number or $\infty$.

By $\BH p$ we denote the von Neumann--Schatten ideal of $p$--summable
operators. Recall that $T\in\LH$ is $p$--summable if 
\[
    \Trb{ |T|^p } = \Trb{ (T^*T)^{p/2} } =  
         \sum_{n=1}^\infty \gl_n(T^*T)^{p/2} <\infty.
\]         

%
\subsection{Main result}
\label{ss:MR}
This paper is inspired by the recent work \britz\ of Britz et al. on the
product formula for regularized Fredholm determinants. Namely, the
classical theory of trace ideals (see \cite{Sim05,Sim77} but see also the
more historical quotes after (1.3) in \britz) allows to generalize the
notion of Fredholm determinant to operators which differ from the identity
by a $p$-summable operator. More concretely, let first $A$ be a trace class
operator in a separable Hilbert space $\cH$. Then
\begin{equation}\label{eq:detF}
	\detF (I + A)  = \sum_{k=0}^{\infty} 
         \Trb{\Lambda^k A}=\prod_{n=1}^N \bl 1+ \gl_n(A) \br
\end{equation}
is called the \emph{Fredholm determinant} of $I+A$, sometimes the latter
being called of \emph{determinant class}. The Fredholm determinant retains
two important properties of the ordinary Linear Algebra determinant.
Firstly, there is a product formula
\begin{equation}\label{eq:product}
   \detF\bl (I+A)\cdot (I+B) \br = \detF(I+A) \cdot \detF(I+B),
\end{equation}
to which we come back in due course and secondly the function $z\mapsto
\detF( I+ z\cdot A)$ is an entire function\footnote{Of course, if
$N<\infty$ it is even a polynomial.} of genus $0$ \cite[Chap. 5, Sec.
2.3]{Ahl:CA} with zeros exactly in $-1/\gl_n(A)$, where $\gl_n(A),1\le n
\le N$, are the nonzero eigenvalues of $A$ counted with multiplicity as
explained above in Section \ref{ss:Not}.  The second property supports
the intuition that $z\mapsto \det(I+z\cdot A)$ plays the role of the
``characteristic polynomial'' of $A$. 

If $A\in\BH m$ is only $m$--summable, $m\in\N, m\geq 2$, then the product
\eqref{eq:detF} will not converge. One therefore employs the classical
Weierstraß method of convergence generating factors. Namely, it turns out
that the operator
\begin{equation}\label{eq:I3}
    (I+A) \cdot \exp\Bl \sum_{j=1}^{m-1}\frac{(-A)^j}{j} \Br
\end{equation}
is of determinant class, i.~e. differs from the identity by a trace class
operator and hence one obtains a regularized version of the Fredholm
determinant by putting
\begin{equation}
\begin{split}
\detm(I+A)
    &:= \detF\Bl (I+A) 
             \cdot \exp\Bl\sum_{j=1}^{m-1}\frac{(-A)^j}{j} \Br \Br\\
    & = \prod_{n=1}^N \Bl (1+ \gl_n) 
             \cdot \exp\Bl\sum_{j=1}^{m-1} \frac{(-\gl_n)^j}{j} \Br\Br,
             \quad \gl_n = \gl_n(A).
\end{split}\label{eq:I4}
\end{equation}
Note that the function of $z$
\begin{equation}\label{eq:canonical}
    \detm(I+z\cdot A) = \mpi n N \biggl[ (1+ z\cdot \gl_n) \cdot
       \exp\Bl\sum_{j=1}^{m-1} \frac{(-\gl_n z)^j}{j} \Br \biggr]
\end{equation}    
is a canonical Weierstraß product of genus $m-1$ \cite[Chap. 5, Sec.
2.3]{Ahl:CA} with zeros exactly in $-1/\gl_n$, hence it is an entire
function of order at most $m$, cf. also \cite[Sec. 4]{HarLes22}.
Of this we will only use that the function
is holomorphic in a neighborhood of $z=0$.

The product formula, however, is more involved.
\begin{theorem}[Product Formula {\cite[Theorem 1.1]{BCG21}}]\label{thm:main}
There exist explicit (see below) polynomials $X_m(a,b)$ over $\Q$ in the
free polynomial algebra $\Qab$ in two noncommuting indeterminates $a,b$
such that the following holds:

Let $\cH$ be a separable Hilbert space and let 
$A, B\in \BH m, m\in\N, m\ge 1$, be $m$-summable operators. Then $X_m(A,B)$
is of trace class and one has
\begin{equation}\label{eq:main}
  \detm\bl (I+A)\cdot(I+B) \br
    = \detm (I+A) \cdot \detm (I+B) \cdot \exp\Trb{ X_m(A,B) }.
\end{equation}
\end{theorem}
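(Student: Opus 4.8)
The plan is to reduce the operator identity to a purely algebraic statement in the free algebra $\Qab$ modulo commutators, and to return to analysis only at the very end. First I would record the expansion that underlies everything: for $\|A\|<1$ one has the trace-norm convergent series
\[
\log\detn(I+A) = \sum_{j\ge m}\frac{(-1)^{j-1}}{j}\Trb{A^j} = \Trb{\ell_m(A)}, \qquad \ell_m(x):=\sum_{j\ge m}\frac{(-1)^{j-1}}{j}x^j,
\]
which is legitimate because $A^j\in\bone$ for $j\ge m$ and $\sum_j j^{-1}\|A^j\|_1<\infty$. Writing $C:=(I+A)(I+B)-I=A+B+AB$, the product formula for small $\|A\|,\|B\|$ becomes the single trace identity $\Trb{\ell_m(C)-\ell_m(A)-\ell_m(B)}=\Trb{X_m(A,B)}$, and it suffices to prove this.

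\emph{Algebraic core.} Working in a suitable completion of $\Qab$, I would split $\ell_m=\log(1+\cdot)-p_{m-1}$, where $p_{m-1}(x)=\sum_{j=1}^{m-1}\frac{(-1)^{j-1}}{j}x^j$ is the degree-$(m-1)$ truncation of $\log(1+x)$, and set
\[
X_m(a,b) := p_{m-1}(a)+p_{m-1}(b)-p_{m-1}\bl (1+a)(1+b)-1 \br .
\]
Then, with $c:=(1+a)(1+b)-1$,
\[
\ell_m(c)-\ell_m(a)-\ell_m(b) = \bl \log(1+c)-\log(1+a)-\log(1+b) \br + X_m(a,b).
\]
The first bracket is $\log(e^u e^v)-u-v$ for $u=\log(1+a),\,v=\log(1+b)$, which by the Baker--Campbell--Hausdorff series is a sum of iterated Lie brackets, hence lies in the closed commutator subspace $\overline{[\Qab,\Qab]}$. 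Since the trace annihilates commutators, this bracket is invisible to $\Tr$, which is exactly why it never appears in the final formula.

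\emph{The main obstacle: trace-class membership.} The naive polynomial $X_m$ is not directly usable, because it contains monomials of length $<m$ (already for $m=3$ a term $\tfrac12(ba-ab)$ survives), and such short words in $A,B\in\BH m$ need not lie in $\bone$. The decisive observation I would exploit is that $\ell_m(c)-\ell_m(a)-\ell_m(b)$ is supported on words of length $\ge m$; combined with the vanishing of the logarithmic bracket in the cyclic quotient $\Qab/\overline{[\Qab,\Qab]}$, this forces the image of $X_m$ to be supported in length $\ge m$ as well. Hence the short part $X_m^{<m}$ lies in $[\Qab,\Qab]$ and, being absent from the length-$\ge m$ content that actually carries the trace, may be discarded: the surviving length-$\ge m$ component is the polynomial we want, every one of its monomials is a product of at least $m$ factors from $\BH m$, so it lands in $\bone$, and its trace reproduces $\Trb{\ell_m(C)-\ell_m(A)-\ell_m(B)}$. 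Establishing this support statement cleanly is the technical heart of the argument, and it is where the interesting interplay of the trace and commutators enters.

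\emph{Finishing, and the analytic variant.} With a trace-class representative of $X_m$ in hand, the small-norm identity follows by matching cyclic-word coefficients, and I would then remove the smallness hypothesis using the holomorphy already recorded in \eqref{eq:canonical}: scaling $A\mapsto zA,\,B\mapsto zB$ makes both sides of \eqref{eq:main} holomorphic in $z$ near $0$, so agreement for small $z$ propagates to $z=1$ by the identity theorem. As a self-contained alternative bypassing the free algebra, I would differentiate, using $\frac{d}{dz}\log\detn(I+zA)=(-1)^{m-1}z^{m-1}\Trb{A^m(I+zA)^{-1}}$ and its product-rule analogue, to write $\frac{d}{dz}\bl \log\detn((I+zA)(I+zB))-\log\detn(I+zA)-\log\detn(I+zB) \br$ as a trace of trace-class operators; expanding the resolvents and invoking cyclicity of $\Tr$, all but finitely many powers of $z$ cancel, so this derivative is a polynomial in $z$, and integrating from $0$ recovers \eqref{eq:main}. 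In both routes the one genuinely delicate point is the same: controlling trace-class membership of the low-order terms.
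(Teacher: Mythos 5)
Your proposal is correct in substance, and its skeleton matches the paper's reduction: both rest on the expansion $\log\detn(I+A)=\Trb{\ell_m(A)}$ for $\|A\|<1$ (the paper's Lemma \ref{lem:analytic}), and on showing that the low-degree part of $\widetilde X_m(a,b)=p_{m-1}(a)+p_{m-1}(b)-p_{m-1}\bigl((1+a)(1+b)-1\bigr)$ (what you call $X_m$) is a sum of commutators in $\Qab$, so that only its degree-$\ge m$ truncation survives; note that your truncation is exactly the paper's explicit polynomial of Proposition \ref{p:2}, since each $y_{\cA,j}$ is homogeneous of degree $j+|\cA|$. Where you genuinely differ is in how you prove that commutator statement (part 2 of Theorem \ref{thm:main1}): you deduce it from the Baker--Campbell--Hausdorff series, writing $\log((1+a)(1+b))-\log(1+a)-\log(1+b)$ as a Lie series and comparing homogeneous components, whereas the paper gives (a) an analytic proof --- $\detn(I+zA)=1+\cO(z^m)$ forces $\Trb{Z_m(zA,zB)}=\cO(z^m)$, but this is a polynomial of degree $\le m-1$ in $z$, hence zero --- and (b) an algebraic proof via the derivation $\partial_t$ and resolvent identities modulo commutators. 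Your BCH route buys conceptual clarity (the multiplicativity defect of $\log\detn$ is literally a bracket series, invisible to any trace), and, like the paper's algebraic proof, it yields the stronger algebraic statement rather than merely an equality of traces. What it costs is that you must control an infinite bracket tail at the operator level: each degree-$d$ BCH component with $d\ge m$ is a finite sum of commutators $[x,y]$ with $x(A,B)$, $y(A,B)$ in Schatten classes $\BH{m/d_1}$, $\BH{m/d_2}$, $d_1+d_2=d\ge m$, so $\Trb{[x(A,B),y(A,B)]}=0$ by H\"older plus cyclicity of the trace on dual Schatten classes, and the rearrangement of the trace-norm convergent series by degree must be justified for small norms --- details you flag but do not carry out, and which the paper's proofs avoid entirely because its $Z_m$ is a polynomial, with no infinite series ever appearing in the free algebra. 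Finally, your identity-theorem step (both sides of \eqref{eq:main} are entire in $z$ after $A\mapsto zA$, $B\mapsto zB$, and agree near $0$) is a valid substitute for the paper's reduction via density of $\bone$ in $\BH m$.
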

\begin{remark}
\simpleenum{1. } Theorem \ref{thm:main} is essentially a reformulation
of \cite[Theorem 1.1]{BCG21}. But there are predecessors. As pointed out
to us by Rupert Frank, Hansmann \cite[Lemma 1.5.10]{Han10} considered the
case where $A$ (or $B$) is of finite rank.

\simpleenum{2. } Of course for an element $f(a,b)\in\Qab$ we denote by
$f(A,B)$ the operator obtained by inserting $A$ for $a$ and $B$ for $b$.
\end{remark}

To describe the polynomials $X_m(a,b)$ we need to introduce one more
notation from \britz. For a subset $\xi\subset\setj$ one puts
\begin{equation}\label{eq:I6}
    \yajk(a,b):=\begin{cases} ab,  & k\in \xi, \\
                                a+b, & k\not\in \xi,
           \end{cases},\quad 
             \yaj(a,b):=\prod_{k=1}^j\yajk(a,b).
\end{equation}
Then
\begin{equation} \label{eq:I7}
    (a+b+ab)^j=\sum_{\xi\subset\{1,..,j\}}\yaj(a,b).
\end{equation}
\begin{prop}\label{p:2} The polynomial $X_m(a,b)$ in Theorem \ref{thm:main}
is explicitly given by
\begin{equation}\label{eq:I8}
    X_m(a,b) = \sum_{j=1}^{m-1} \frac{(-1)^j}{j}
       \sum_{\substack{
               \xi\subset\setj\\
                j+|\xi|\ge m}
            }
        \yaj(a,b).
\end{equation}
\end{prop}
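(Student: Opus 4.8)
The plan is to reduce \eqref{eq:main} to a computation with the regularizing polynomials and then to read off $X_m$ by keeping only the trace-class part. First I would record the identity underlying \eqref{eq:I4}: for $A\in\Bn$ with, say, all $|\gl_n(A)|<1$, one has $\log\detn(I+A)=\Trb{L_m(A)}$, where
\[
  L_m(A):=\log(I+A)+P_m(A)=\sum_{k\ge m}\frac{(-1)^{k-1}}{k}A^k,\qquad P_m(x):=\sum_{j=1}^{m-1}\frac{(-x)^j}{j};
\]
the regularizing polynomial $P_m$ cancels the first $m-1$ terms of $\log(I+A)$, so that $L_m(A)$ involves only the powers $A^k$ with $k\ge m$, which lie in $\bone$ since $A\in\Bn$. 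Writing $C:=A+B+AB$, so that $I+C=(I+A)(I+B)$, taking logarithms in \eqref{eq:main} reduces the whole statement to identifying $\Trb{L_m(C)-L_m(A)-L_m(B)}$ with $\Trb{X_m(A,B)}$.

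The main obstacle is that one cannot split this trace along $L_m=\log(I+\cdot)+P_m$, because for $m\ge2$ neither $\log(I+C)$ nor $P_m(C)$ is separately of trace class. I would get around this by working in a fixed total degree, equivalently by rescaling $A,B$ to $tA,tB$ and comparing Taylor coefficients in $t$ near $0$ (here $C$ becomes the result of substituting $tA,tB$ into $a+b+ab$, so a monomial of total degree $d$ always carries the factor $t^d$). In a fixed degree $d$ the homogeneous component $[L_m(C)-L_m(A)-L_m(B)]_d$ vanishes for $d<m$ and, by the first step, is a genuine trace-class operator for $d\ge m$; only there do I split it. In total degree $d\ge m$ the relevant products are trace class, so the cyclicity $\Trb{UV}=\Trb{VU}$ applies (Hölder for Schatten ideals), and the logarithmic part contributes nothing: the formal cyclic trace satisfies $\tau(\log(1+c))=\tau(\log(1+a))+\tau(\log(1+b))$ degree by degree --- which I would verify by differentiating $\tau\bl\log((1+sa)(1+b))\br$ in $s$ and using cyclicity --- so each $[\log(I+C)-\log(I+A)-\log(I+B)]_d$ is a sum of commutators and has vanishing operator trace for $d\ge m$.

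What remains is purely algebraic. Once the logarithmic part has dropped out we are left, for $d\ge m$, with
\[
  \Trb{[L_m(C)-L_m(A)-L_m(B)]_d}=\Trb{[P_m(C)-P_m(A)-P_m(B)]_d}.
\]
I would then expand $P_m(C)-P_m(A)-P_m(B)=\sum_{j=1}^{m-1}\tfrac{(-1)^j}{j}\bl C^j-A^j-B^j\br$ and insert \eqref{eq:I7}, namely $C^j=\sum_{\cA\subset\setj}\yaj(A,B)$. The decisive observation is that each $\yaj$ is \emph{homogeneous} of degree $j+|\cA|$ (every factor $ab$ contributes $2$, every factor $a+b$ contributes $1$), whereas $A^j$ and $B^j$ are homogeneous of degree $j\le m-1<m$. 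Hence the subtracted monomials $A^j,B^j$ lie entirely below degree $m$ and do not affect the degree $\ge m$ part, while the degree $\ge m$ part of $C^j$ is exactly $\sum_{\cA:\,j+|\cA|\ge m}\yaj$. Summing over $j$ and over $d\ge m$ reproduces precisely \eqref{eq:I8}, and the same constraint $j+|\cA|\ge m$ forces every monomial of $X_m$ to have degree $\ge m$, so that $X_m(A,B)\in\bone$ as claimed.

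Finally, exponentiating the coefficientwise trace identity gives
\[
  \detn\bl(I+tA)(I+tB)\br=\detn(I+tA)\,\detn(I+tB)\,\exp\Trb{X_m(tA,tB)}
\]
for small $t$; as both sides are entire in $t$ they agree everywhere, and $t=1$ is \eqref{eq:main}. In the algebraic variant one instead checks that the underlying identity already holds in the free algebra before evaluating at $A,B$, so no analytic continuation is needed.
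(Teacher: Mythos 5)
Your argument is correct, and it reaches \eqref{eq:I8} by a route organized genuinely differently from the paper's. The paper first reduces to $A,B\in\bone$ by density (using that every monomial of $X_m$ has degree $\geq m$), then invokes the classical product formula \eqref{eq:product} for $\detF$ to produce the correction term $\widetilde X_m(A,B)=\sum_{j=1}^{m-1}\frac{(-1)^j}{j}\bigl((A+B+AB)^j-A^j-B^j\bigr)$, and concentrates all remaining work in Theorem \ref{thm:main1}, proved either by a degree count ($\Trb{Z_m(zA,zB)}$ is a polynomial of degree $\leq m-1$ which is $\cO(z^m)$, hence zero) or by exhibiting $\widetilde X_m-X_m$ as a sum of commutators. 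You bypass both the density reduction and the appeal to \eqref{eq:product}: working directly with $A,B\in\BH m$ of small norm you use the log-series identity $\detn(I+A)=\exp\Trb{L_m(A)}$ (the paper's Lemma \ref{lem:analytic}), observe that $X_m$ is exactly the degree-$\geq m$ truncation of $\widetilde X_m=P_m(C)-P_m(A)-P_m(B)$ --- the same bookkeeping via \eqref{eq:I7} that produces \eqref{eq:I13} --- and replace the classical product formula by the formal cyclic-trace additivity of $\log$, degree by degree. That last step is where your proof secretly meets the paper's algebraic one: the computation $\partial_s\,\tau\log\bigl((1+sa)(1+b)\bigr)=\tau\bigl((1+sa)^{-1}a\bigr)$ requires the trace-derivative lemma $\partial_s\,\tau f(M(s))=\tau\bigl(f'(M(s))M'(s)\bigr)$, since $\partial_s\log M(s)\neq M(s)^{-1}M'(s)$ as elements --- this is precisely the paper's Lemma \ref{lem:alg2}, and after cycling your identity is the paper's factorization $a+b+2tab=(1+ta)b+a(1+tb)$ in disguise. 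What your organization buys: the formula is obtained directly on $\BH m$ without approximation, and your degree-$<m$ bookkeeping yields Theorem \ref{thm:main1}.2 as a byproduct (for $d<m$ the degree-$d$ component of $\widetilde X_m$ equals minus the corresponding component of the log-difference, hence is a sum of commutators). What it costs: you need entirety of $t\mapsto\detn\bigl((I+tA)(I+tB)\bigr)$, i.e.\ analyticity of $\detn(I+\cdot)$ along the quadratic pencil $tA+tB+t^2AB$ (standard, but slightly more than the paper's quoted fact for linear pencils $I+zA$), and in degree $d\geq m$ you must take the commutator decomposition with homogeneous entries whose degrees sum to $d$, so that H\"older and $\Trb{UV}=\Trb{VU}$ apply --- a point you correctly flag and which is unproblematic since the commutator subspace of a graded algebra is graded.
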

\begin{remark} Note that $\yaj(a,b)$ is a product of $|\xi|+j$ factors.
Hence $\yaj(A,B)\in\bone$ if $A,B\in\BH m$ and $|\xi|+j\ge m$.
We will from now on reserve the notation $X_m(a,b)$ for the right hand
side of \eqref{eq:I8}. It is then to be proved that \eqref{eq:main}
holds.

Furthermore, $X_m(a,b)$ has degree $2m-2$ and furthermore each monomial in
$X_m(a,b)$ has degree at least $m$. Introducing another indeterminate $t$
over $\Q$ which commutes with $a$ and $b$ this could equivalently be
expressed in the algebra $\Qab[t]$ as $X_m(ta,tb) = \cO(t^m)$.
This observation will play a prominent role below.

Finally, we note that when comparing with \britz\ one needs to take
into account that we have a different sign convention due to the fact
that we consider $\detm(I+A)$ instead of $\detm(I-A)$.
\end{remark}

\subsection{Reduction to a combinatorial problem in the trace class case}
\label{ss:RCP}
The following reduction is the same as in the first part of the proof of
Theorem 3.1 in \britz. To be self-contained we briefly explain this here.
Thereafter we will be able to formulate the result for which we will be
giving two independent proofs further below.

\subsubsection{First reduction to the trace class case}\label{sss:1}
The first reduction is rather trivial. Namely, suppose the product formula
in Theorem \ref{thm:main} is proved for $A,B$ of trace class.  Since the
monomials occurring in $X_m(A,B)$ are at least of degree $m$ it follows
that $\Trb{ X_m(A,B) }$ is a continuous function on $\BH m\times \BH m$.
Consequently, both sides of the product formula depend continuously on
$A,B\in\BH m$. Since $\bone$ is dense in $\BH m$ it therefore suffices to
prove the product formula for $A,B\in\bone$ of trace class.

\subsubsection{Second reduction}\label{sss:2}
Thus suppose that $A,B\in\bone$. Then by \eqref{eq:I3} and 
\cite[Theorem 6.2]{Sim77}
\begin{equation*}
    \detm(I+A)=\detF(I+A)\cdot 
       \exp\Tr\Bl \sum_{j=1}^{m-1}\frac{(-A)^j}{j}\Br,
\end{equation*}
which implies that
\newcommand{\Xtilde}{\widetilde X}
\begin{equation} \label{eq:I9}
  \begin{split}
    \detm\bl (I+A)&\cdot (I+B) \br = \detm\bl I+A+B+AB \br \\
                  &= \detm (I+A) \cdot \detm (I+B) \cdot \exp\Trb{ \Xtilde_m(A,B) },
  \end{split}
\end{equation}
with, see \eqref{eq:I7},
\begin{equation}\label{eq:I10}
  \begin{split}
    \Xtilde_m(A,B) &= 
           \sum_{j=1}^{m-1}\frac{(-1)^j}{j}\bl(A+B+AB)^j-A^j-B^j\br \\
        &= \sum_{j=1}^{m-1}\frac{(-1)^j}{j}\Bl
       \sum_{\xi\subset\setj}  \yaj(A,B) - A^j -B^j\Br.
  \end{split}
\end{equation}
Hence for $A,B$ of trace class it is rather simple to derive a product
formula (i.~e. \eqref{eq:I9}) and hence an explicit candidate for the
correction factor on the right hand side of \eqref{eq:main}.
The problem, however, is that if $A,B\in\BH m$ then $\Xtilde_m(A,B)$
is not even well-defined as it contains monomials of degree $<m$.
Those monomials are not of trace class if $A,B$ are only in $\BH m$.

The heart of the proof of the product formula therefore is the following
Theorem.

\begin{theorem}\label{thm:main1}
\simpleenum{1.} For $A,B\in\bone$ we have
\begin{equation}\label{eq:I12}
        \Trb{ X_m(A,B) }  = \Trb{ \Xtilde_m(A,B) } .
\end{equation}
\simpleenum{2.} In the free polynomial $\Q$-algebra $\Qab$ generated by two
noncommuting indeterminates $a,b$ the difference
$\Xtilde_m(a,b) - X_m(a,b)$ is a sum of commutators.
\end{theorem}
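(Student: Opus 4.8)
The plan is to establish part~2 first and then obtain part~1 for free. Indeed, once we know that $Q_m := \Xtilde_m - X_m$ is a finite $\Q$-linear combination of commutators $[u,v]=uv-vu$ with $u,v\in\Qab$ of positive degree, part~1 is immediate: for $A,B\in\bone$ each of $u(A,B),v(A,B)$ is of trace class, so $u(A,B)v(A,B)$ and $v(A,B)u(A,B)$ are of trace class with equal trace, whence $\Trb{[u,v](A,B)}=0$ and therefore $\Trb{Q_m(A,B)}=0$, which is exactly \eqref{eq:I12}. Thus the whole content sits in part~2.

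The first step toward part~2 is a homogeneity bookkeeping. Assigning degree $1$ to each of $a,b$, the remark following \eqref{eq:I8} says that the monomial $\yaj$ with $|\cA|=s$ is homogeneous of degree $j+s$. Hence $X_m$ is exactly the degree-$\ge m$ part of $S_m:=\sum_{j=1}^{m-1}\frac{(-1)^j}{j}(a+b+ab)^j$, whereas $\Xtilde_m = S_m - \sum_{j=1}^{m-1}\frac{(-1)^j}{j}(a^j+b^j)$ differs from $S_m$ only in degrees $\le m-1$. Subtracting, I obtain the clean reformulation that $Q_m=\Xtilde_m-X_m$ is precisely the sum of those homogeneous components of $\Xtilde_m$ of degree $<m$.

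The second step identifies this low-degree truncation with a Baker–Campbell–Hausdorff remainder. Working in the completed free algebra $\Q\langle\langle a,b\rangle\rangle$, set $X=\log(1+a)$ and $Y=\log(1+b)$, so that $e^Xe^Y=(1+a)(1+b)=1+(a+b+ab)$ and
\[
   R(a,b):=\log(e^Xe^Y)-X-Y
     =\sum_{k\ge1}\frac{(-1)^{k-1}}{k}\bigl((a+b+ab)^k-a^k-b^k\bigr).
\]
By the BCH theorem $R$ is a Lie series, so each homogeneous component $R_d$ lies in the commutator subspace $[\Qab,\Qab]$. Comparing $R$ with $\Xtilde_m$, and using that the degree-$d$ part of $(a+b+ab)^k-a^k-b^k$ vanishes for $k>d$, one sees that for every $d\le m-1$ only indices $k\le d\le m-1$ contribute, and these already occur in the truncated sum defining $\Xtilde_m$. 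Matching the coefficients $\frac{(-1)^j}{j}$ and $\frac{(-1)^{j-1}}{j}$ then yields $Q_m=-\sum_{d=1}^{m-1}R_d$, a finite sum of commutators, proving part~2.

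The main obstacle is exactly this last bookkeeping: one must verify that truncating the $j$-summation at $m-1$ in $\Xtilde_m$ loses nothing in degrees below $m$, so that $Q_m$ genuinely coincides with the degree-$<m$ truncation of the commutator-valued series $R$. The remaining ingredients — the homogeneity of $\yaj$, the vanishing of the trace of a commutator of trace class operators, and the standard fact that the BCH series is a Lie series — are routine, so the entire argument reduces to this degree count.
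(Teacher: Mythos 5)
Your proof is correct, but it reaches Theorem \ref{thm:main1} by a genuinely different route than the paper. Like the paper, you deduce part 1 from part 2 (the paper itself notes that 2 implies 1; your restriction to commutators $[u,v]$ with $u,v$ of positive degree is exactly what makes $u(A,B),v(A,B)$ trace class so that $\Trb{[u,v](A,B)}=0$). For part 2, however, the paper gives a self-contained formal-calculus argument: it works in $\Qab[t]$ modulo the commutator subspace, differentiates $\widetilde X_m(ta,tb)$ in $t$ via the chain-rule Lemma \ref{lem:alg2}, and checks, using the identity $a+b+2tab=(1+ta)b+a(1+tb)$ together with cyclicity modulo commutators, that this derivative vanishes modulo $t^{m-1}$; the paper's other (analytic) proof establishes part 1 only, via $\detn(I+zA)=1+\cO(z^m)$ and a polynomial-degree count. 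You instead identify $\widetilde X_m-X_m$ as the degree-$<m$ truncation of the Baker--Campbell--Hausdorff remainder $\log\bigl((1+a)(1+b)\bigr)-\log(1+a)-\log(1+b)$ and invoke the BCH theorem. Your bookkeeping is right on all counts: $X_m$ is exactly the degree-$\geq m$ part of $\sum_{j=1}^{m-1}\frac{(-1)^j}{j}(a+b+ab)^j$ because $\yaj$ is homogeneous of degree $j+|\cA|$; the subtracted powers $a^j,b^j$ live in degree $\leq m-1$; and since the degree-$d$ component of $(a+b+ab)^k-a^k-b^k$ vanishes for $k>d$, truncating the logarithm series at $k=m-1$ is faithful in every degree $d\leq m-1$, giving $\widetilde X_m-X_m=-\sum_{d=1}^{m-1}R_d$. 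The trade-off: the paper's algebraic proof is elementary and self-contained (Leibniz rule plus vanishing of the quotient trace on commutators), whereas yours is shorter and more conceptual --- it explains $\widetilde X_m$ as a truncated BCH remainder, which is natural in view of $\det=\exp\circ\Tr\circ\log$ --- but it imports the nontrivial theorem that the BCH series is a Lie series. One step you should spell out: after substituting the series $X=\log(1+a)$, $Y=\log(1+b)$ into the universal Lie series, each homogeneous component in total $(a,b)$-degree is a \emph{finite} linear combination of iterated brackets of the monomials $a^i$, $b^j$ (multilinear expansion; only finitely many terms contribute in each degree), which is precisely what places $R_d$ in $[\Qab,\Qab]$ with commutator entries of positive degree, as your part-1 argument requires.
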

\eqref{eq:I9}, \eqref{eq:I12} and the reduction to the trace class case
Section \ref{sss:1} imply Theorem \ref{thm:main} (and Proposition
\ref{p:2}). Clearly, the second part of Theorem \ref{thm:main1} implies the
first. Theorem \ref{thm:main1} is implicit in \cite[Lemma 2.4]{BCG21}. 

This is the point where our exposition deviates from the source. As
announced before we will present two independent proofs of Theorem
\ref{thm:main1} and hence of the Product Formula Theorem \ref{thm:main}.

For future reference we put 
$Z_m(t a,t b) = \Xtilde_m(t a, tb) - X_m(t a,t b)$,
explicitly
\footnote{Recall from the Remark above that we work in $\Qab[t]$,
where $t$ is an auxiliary indeterminate commuting with $a$ and $b$.}
\begin{equation}\label{eq:I13}
  \begin{split}
   Z_m&(t a,t b)  = \Xtilde_m(t a, tb) - X_m(t a,t b) \\
     & = \sum_{j=1}^{m-1}\Biggl[
        \frac{(-t)^j}{j}\Bl(a+b+t ab)^j-a^j-b^j 
           -\sum_{\substack{
               \xi\subset\setj\\
                j+|\xi|\geq m}
            }     \yaj(a,b)\, t^{|\xi|}\Br
            \Biggr]\\
        &= \sum_{j=1}^{m-1}\frac{(-t)^j}{j}\Bl
       \sum_{\substack{
               \xi\subset\setj\\
                j+|\xi|< m}
            }
        \yaj(a,b)\, t^{|\xi|} -a^j -b^j\Br,
    \end{split}
\end{equation}
where again \eqref{eq:I7} was used.
The $t$ is just a convenient marker to count orders.

\subsection{The multi-factor case}
\label{ss:MFC}
The generalization to several factors of Theorem \ref{thm:main}
now reads as follows
\begin{theorem}\label{thm:main-mf}
There exist explicit polynomials $\Xmr\avec$
in the free polynomial algebra $\Qar$ over $\Q$ in $r$ noncommuting
indeterminates $\avec$ such that the following holds:
\begin{thmenum}
\item $\Xmrr$ is of degree at most $r\cdot (m-1)$. 
\item All monomials occuring in $\Xmrr$ have degree at least $m$.
\item Let $\cH$ be a separable Hilbert space and let 
$\Avec\in \BH m, m\in\N, m\ge 1$, be $m$-summable operators. Then 
$\Xmr\Avec$ is of trace class and one has
\begin{equation}\label{eq:main-mf}
  \detm\Bl \prod_{l=1}^r (I+A_l) \Br
    = \left( \prod_{l=1}^r \detm (I+A_l) \right)
       \cdot \exp\Trb{\Xmr\Avec}.
\end{equation}
\end{thmenum}
\end{theorem}
\begin{remark}
\simpleenum{1. } We learned from Rupert Frank that in
\cite[Lemma C.1]{Fra18} he proves a formula for three factors
if one of the operators is of finite rank. He kindly suggested to
us to consider the multi-factor case which is gratefully acknowledged.

\simpleenum{2. }
We emphasize that the second condition is crucial for $\Xmr\Avec$
being of trace class. 

\simpleenum{3. }
The explicit formula for $\Xmrr$ will be given after some preparations in
\eqref{eq:4.6} in Section \ref{s:ACP} in the text below.
\simpleenum{4. } The proof in the multi-factor case requires a bit
more algebraic and combinatorial preparations which make the proof
appear unduly lengthy. We emphasize, however, that both the algebraic
as well as the combinatorial facts outlined in some detail are very
elementary and that the basic idea of proof is very simple as the
short final section shows.
\end{remark}

\subsection{} The paper is organized as follows. In the first part
we present the two-factor case. This is basically the first version 
of this paper which was put on \verb+arXiv+ in Feb. 2022. 
In this part we give two independent
proofs of the product formula, one using a tiny bit of complex 
analysis and the other being completely algebraic.

In the second part we present the multi-factor case. This requires
a bit more algebraic preparations which are given in Section \ref{s:ACP}.
Section \ref{s:RCPTCC} then reduces the problem to a combinatorial
problem for trace class operators. Finally, in Section \ref{s:TPT}
we present again two proofs of the multi-factor case. The two
proofs are formally analogues of the two proofs given in the 
two-factor case. However, both proofs are formulated in the language
of formal power series without reference to complex analysis.

In principle the two parts are independent and could have been either
presented as paper I and paper II or they could have been more streamlined
into one slightly shorter paper dealing with the most general case only.
However, we wanted to keep the original flavor and therefore chose
the current presentation.

\subsection{Acknowledgment} The first part of this
paper is based on the first named author's Master's thesis \cite{KK21}
written under the supervision of the second named author within the
Master's program of the Mathematical Institute of the University of Bonn. A
first version on the two-factor case was put on \verb+arxiv+ as
\verb+arXiv:2202.12923v1+.  To consider the multi-factor case was suggested
to us by Rupert Frank whose insight and interest is appreciated.

\clearpage
\part{The case of 2 factors}\label{part1}
\newenvironment{msummary}{%
\let\origabstractname\abstractname
\renewcommand{\abstractname}{Summary}
\begin{abstract}}{%
\end{abstract}
\let\abstractname\origabstractname}

\begin{msummary}
In this first part we present our two new proofs of Theorem \ref{thm:main},
cf. \cite[Theorem 1.1]{BCG21}.
\end{msummary}


\section{An analytic proof of the product pormula}
\label{s:APPF}

\subsection{Preliminaries}\label{ss:P}
The following Lemma can be found between the lines in \cite{Sim77} and 
it can even be traced back to Poincare 
\cite[Second paragraph in Sec. 6]{Sim77}.
\begin{lemma}\label{lem:analytic}
Let $A\in\BH m$ with $\|A\|<1$. Then the series
$\sum\limits_{j=m}^\infty \frac{(-A)^j}{j}$ converges in the trace norm
and
\begin{equation}\label{eq:2.1}
    \detm( I+ A ) = \exp\Bl -\sum_{j=m}^\infty \frac{ \Tr(-A)^j}{j} \Br.
\end{equation}
Consequently, $A\mapsto \sum\limits_{j=m}^\infty \frac{(-A)^j}{j}$ is an
analytic mapping
\[
    \bigsetdef{A\in\BH m}{\|A\|<1} \to \bone.
\]
Furthermore, for any $A\in\BH m$ the entire function 
$z\mapsto \detm(I+z\cdot A)$ satisfies
\begin{equation}\label{eq:2.1a}
   \detm(I+z\cdot A) = 1 +\cO(z^m),\quad\text{as } z\to 0. 
\end{equation}
That is the value $1$ at $z=0$ is assumed with multiplicity
$\geq m$.
\end{lemma} 
\begin{proof} This follows immediately from the inequality
\[
   |\Tr (-A)^j|\le \|A^j\|_1\le \|A^m\|_1 \cdot \|A\|^{j-m},
   \quad\text{for } j\ge m,
\]
and, cf. \eqref{eq:I4},
\[
\begin{split}
    (I+A) \cdot \exp\Bl\sum_{j=1}^{m-1}\frac{(-A)^j}{j}\Br
       &= \exp\Bl \sum_{j=1}^\infty\frac{ -(-A)^j}{j}\Br \cdot
                    \exp\Bl\sum_{j=1}^{m-1}\frac{(-A)^j}{j}\Br\\
       & = \exp\Bl \sum_{j=m}^\infty\frac{ -(-A)^j}{j}\Br.
\end{split}
\]
Clearly, the formula now gives (again using \cite[Theorem 6.2]{Sim77}) for
arbitrary $A\in\BH m$ and $z$ in a sufficiently small neighborhood of $0$
(i.~e. such that $ |z| \|A\|<1$)
\begin{equation}\label{eq:2.2}
    \detm( I+ z\cdot A )
      = \exp\Bl -\sum_{j=m}^\infty \frac{ \Tr(-A)^j}{j}  z^j\Br
      = 1 + \cO(z^m), \text{ as } z\to 0.
\end{equation}
As remarked in the Introduction after \eqref{eq:canonical}, $z\mapsto
\detm(I+ z\cdot A)$ is a canonical Weierstraß product of genus $m$, hence
entire. 
\end{proof}

\subsection{Proof of Theorem \ref{thm:main1} \textbf{1.}}
\label{ss:PTmain1}
It now follows from Lemma \ref{lem:analytic} that, replacing
$A$ by $z\cdot A$, and $B$ by $z\cdot B$, the left
hand side of \eqref{eq:I9} satisfies
\[
    \detm\bl (I+ z\cdot A)\cdot (I+z\cdot B) \br 
       = 1 + \cO(z^m), \text{ as } z\to 0,
\]
and the first two-factors on the right hand side of \eqref{eq:I9} satisfy
\[
   \detm(I+z\cdot A) = 1+\cO(z^m),\quad
   \detm(I+z\cdot B)=1+\cO(z^m)
\]
as well. Thus
\[
    \exp\Bl\Trb{ \Xtilde_m(zA,zB) } \Br= 1 + \cO(z^m), \text{ as } z\to 0.
\]
Taking the principal branch of $\log$ on both sides we find
\[
    \Trb{ \Xtilde_m(zA,zB) }= \cO(z^m), \text{ as } z\to 0,
\]
and further, since each monomial in $X_m(zA,zB)$ has degree
$\ge m$ we have 
\[
  \Trb{ X_m(zA,zB) } = \cO(z^m)
\]
on the nose. Taking differences we therefore find (see \eqref{eq:I13})
\[
    \Trb{ Z_m(zA,zB) }= \cO(z^m), \text{ as } z\to 0.
\]
But $\Trb{ Z_m(z A, z B) }$ is a \emph{polynomial of degree $\leq m-1$} in $z$.
Being $\cO(z^m)$ means it must vanish. This proves Theorem \ref{thm:main1}
1.  and hence the first proof of Theorem \ref{thm:main} is finished.\qed
\begin{remark} We emphasize that the decisive (easy) fact which is needed
for proving the product formula is that
$\detm(I+z\cdot A) =1+ \cO(z^m)$ or equivalently that
\begin{equation}\label{eq:2.3}
\log\detm(I+z\cdot A) = \cO(z^m), \text{ as } z\to 0.
\end{equation}
\end{remark}

\section{Algebraic approach to the product formula}
\label{s:AAPF}
%
\subsection{Elementary commutative algebra considerations}
\label{ss:ECA}
We elaborate here a bit more than would be absolutely necessary
to prove the second part of Theorem \ref{thm:main1}. Rather 
we indulge a little into elementary commutative algebra considerations.

Let $\K$ be a field of characteristic $0$ and let $\cA$ be a unital
$\K$-algebra.  By $\CA:=\Acomm \subset \cA$ we denote the commutator
\emph{subspace}, i.e. the linear span of commutators $[a,b]:=ab-ba,
a,b\in\cA$.  $\Acomm$ is a module over the \emph{center} $\ZA$ of $\cA$.

By $\trA$ we denote the quotient map (the ``trace'')
\[
   \trA:\cA \longrightarrow \cA/\CA =: \QA.
\]
A priori $\QA$ is only a \emph{$\K$-vector space}.  
E.~g. in the case of $\cA=\bone$ we have the commutative diagram
\begin{equation}\label{eq:3.1}
\begin{tikzcd}
    \cA\arrow[dr,"\Tr"] \arrow[r,"\trA"] & \QA\arrow[d, "\widetilde \Tr"] \\
                       &   \C.
\end{tikzcd}
\end{equation}

\newcommand\Dtilde{\widetilde D}
\begin{lemma}\label{lem:alg1}
\simpleenum{1.} $\Acomm$ and $\QA$ are natural
$\ZA$-modules and $\trA$ is a $\ZA$-module map
satisfying  $\trA( a\cdot b) = \trA( b\cdot a)$ for all $a,b\in\cA$.
\simpleenum{2.} If $D$ is a derivation of $\cA$ then $D$
descends to a $\ZA$-derivation
\footnote{More precisely,
$\Dtilde$ is a linear endomorphism of the $\ZA$-module
$\QA$ satisfying $\Dtilde(z\cdot a) = D(z)\cdot a+ z\cdot \Dtilde a$
for $z\in\ZA$ and $a\in\QA$.} $\Dtilde$ of $\QA$
such that the following diagram commutes
\[
\begin{tikzcd}
     \cA \arrow[d,"\trA"]\arrow[r, "D"] & \cA\arrow[d,"\trA"]\\
     \QA \arrow[r,"\Dtilde"]        & \QA.
\end{tikzcd}
\]
\end{lemma}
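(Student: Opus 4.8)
The plan is to verify the two parts essentially by unwinding definitions, the only real content being the compatibility of a derivation with the commutator subspace. For part \textbf{1.}, I would first recall that the center $\cZ$ acts on $\cA$ by multiplication, so to see that $\Acomm$ is a $\cZ$-submodule it suffices to check that $z\cdot[a,b]$ is again a commutator for every $z\in\cZ$. This is immediate from $z\cdot(ab-ba) = (za)b - b(za)$, using that $z$ commutes with $b$; hence $z\cdot[a,b] = [za,b]$ lies in $\Acomm$. Since $\Acomm$ is a $\cZ$-submodule, the quotient $\Aquot$ inherits a $\cZ$-module structure in the canonical way, and the quotient map $\trA$ is then a $\cZ$-module homomorphism by construction. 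I would state this compatibility explicitly, $\trA(z\cdot a) = z\cdot\trA(a)$, since it is exactly what ``$\trA$ is a $\cZ$-module map'' means.

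For part \textbf{2.}, the key observation is that a derivation $D$ maps commutators to commutators, so that $D(\Acomm)\subset\Acomm$; this is what allows $D$ to descend to the quotient. Concretely, I would compute
\[
   D([a,b]) = D(ab-ba) = (Da)b + a(Db) - (Db)a - b(Da) = [Da,b] + [a,Db],
\]
which is visibly a sum of commutators. By linearity $D$ preserves the whole subspace $\Acomm$, and therefore induces a well-defined linear endomorphism $\Dtilde$ on $\Aquot$ making the square commute, i.e. $\Dtilde\circ\trA = \trA\circ D$. The commutativity of the diagram is then just the statement that $\Dtilde$ is defined by this very formula.

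It remains to check the twisted Leibniz rule recorded in the footnote, namely $\Dtilde(z\cdot a) = D(z)\cdot a + z\cdot\Dtilde a$ for $z\in\cZ$ and $a\in\Aquot$. Here I would first note that $D$ maps the center into the center: for $z\in\cZ$ and arbitrary $c\in\cA$ one has $0 = D([z,c]) = [Dz,c] + [z,Dc] = [Dz,c]$, the last commutator vanishing since $z$ is central, so $Dz$ commutes with every $c$ and hence $Dz\in\cZ$. This makes the right-hand side of the claimed identity meaningful as a $\cZ$-module expression on $\Aquot$. Lifting to $\cA$, the ordinary Leibniz rule gives $D(za) = (Dz)a + z(Da)$, and applying $\trA$ together with part \textbf{1.} yields the desired formula after passing to the quotient.

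The main obstacle is conceptual rather than computational: one must be slightly careful that the induced map $\Dtilde$ is well defined, which is precisely the point that requires $D(\Acomm)\subset\Acomm$, and one must check that $D$ preserves the center before the twisted Leibniz rule can even be stated. Both points are short once isolated, so I expect the proof to be a brief and self-contained verification.
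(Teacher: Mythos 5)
Your proposal is correct and follows essentially the same route as the paper: part \textbf{1.} via $z[a,b]=[za,b]$, and part \textbf{2.} via $D[a,b]=[Da,b]+[a,Db]$ together with the observation that $D$ preserves the center $\cZ$ (your derivation $0=D([z,c])=[Dz,c]$ is a trivial variant of the paper's direct Leibniz computation $(Dz)a=D(za)-zDa=D(az)-(Da)z=aDz$). Your explicit verification of the twisted Leibniz rule on the quotient merely spells out a step the paper leaves implicit, so no substantive difference remains.
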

\begin{proof}\simpleenum{1.} follows immediately from
$z[a,b]=[za,b]=[a,zb]$ for $a,b\in\cA$ and $z\in\ZA$.
Clearly, $\trA(a\cdot b) =\trA(b\cdot a)$ holds by construction.
\simpleenum{2.} $D[a,b]= [Da,b]+[a,Db]$, hence $D$ leaves
$\Acomm$ invariant. Furthermore, for $z\in\ZA$ and $a\in\cA$
\[
    (Dz)a = D(za) - z Da = D(az) - (Da)z = a Dz,
\]
hence $D$ leaves $\ZA$ invariant as well. Consequently,
$\Dtilde$ is a well-defined $\ZA$-deriva\-tion on the quotient
$\QA$.
\end{proof}

For a polynomial $f(t)\in\K[t]$ and $a\in\cA$ we denote by
$f(a)\in\cA$ the element obtained by replacing $t$ by $a$
(the ``insertion homomorphism''). Similarly, if $f(t),g(t)\in\K[t]$
and $g(a)$ is invertible in $\cA$ then 
\[
(f/g)(a):= f(a) g(a)\ii =g(a)\ii f(a).
\]
If $\widetilde f(t),\widetilde g(t)\in\K[t]$
with $\widetilde g(a)$ invertible and $(f/g)(t)=(\widetilde f/\widetilde
g)(t)$ then indeed $(f/g)(a)=(\widetilde f/\widetilde g)(a)$.
By $\partial_t$ we denote the usual derivation on polynomial algebras
with indeterminate $t$ obtained by formal differentiation by $t$.
For $f(t)\in\K[t]$ we also write $f'(t)$ for $\partial_t f(t)$.

\begin{lemma}\label{lem:alg2} Let $a,x\in\cA$ with $[a,x]=0$ and
$f(t),g(t)\in\K[t]$ with $g(a)$ invertible. Then
\[
    \trA\bl x\cdot D\bl \frac fg(a) \br \br = 
         \trA\bl x\cdot \bl \frac fg\br'(a)\cdot Da \br,
\]
in particular 
\[
    \Dtilde \trA\bl \frac fg(a)       \br
        =   \trA\bl \bl\frac fg\br'(a) \cdot Da      \br.
\]
\end{lemma}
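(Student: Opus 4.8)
The plan is to establish the first (general) identity for a single element $x$ commuting with $a$, and then to deduce the ``in particular'' statement by specializing to $x=1$. Indeed, taking $x=1$ (which trivially satisfies $[a,1]=0$ and $\trA(1\cdot c)=\trA(c)$) reduces the second formula to $\trA\bigl(D((f/g)(a))\bigr)=\trA\bigl((f/g)'(a)\,Da\bigr)$, and the left-hand side equals $\Dtilde\,\trA\bigl((f/g)(a)\bigr)$ by the commuting square $\Dtilde\circ\trA=\trA\circ D$ of Lemma~\ref{lem:alg1}.2. So everything comes down to the first displayed identity.

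First I would treat the polynomial case $g=1$. For a monomial the Leibniz rule for the derivation $D$ gives $D(a^n)=\sum_{i=0}^{n-1}a^{i}(Da)a^{\,n-1-i}$. Multiplying by $x$ and applying $\trA$, I use two facts in tandem: the trace kills commutators, so $\trA(uv)=\trA(vu)$ for all $u,v$, and $x$ commutes with $a$. Cyclically rotating $a^{\,n-1-i}$ to the front and then absorbing it through $x$ via $a^{\,n-1-i}x a^{i}=x a^{\,n-1}$ collapses every one of the $n$ summands to the same term $\trA(x\,a^{n-1}\,Da)$. Hence $\trA\bigl(x\,D(a^n)\bigr)=n\,\trA\bigl(x\,a^{n-1}\,Da\bigr)=\trA\bigl(x\,(t^{n})'(a)\,Da\bigr)$, and by $\K$-linearity $\trA\bigl(x\,D(f(a))\bigr)=\trA\bigl(x\,f'(a)\,Da\bigr)$ for every $f\in\K[t]$.

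Next I would pass to rational functions. Writing $(f/g)(a)=f(a)g(a)^{-1}$ and differentiating the relation $g(a)g(a)^{-1}=I$ yields $D(g(a)^{-1})=-g(a)^{-1}D(g(a))g(a)^{-1}$, so that
\[
D\bigl((f/g)(a)\bigr)=D(f(a))g(a)^{-1}-f(a)g(a)^{-1}D(g(a))g(a)^{-1}.
\]
Now I would feed each term into the polynomial identity by absorbing the (mutually commuting) factors $f(a),g(a)^{-1}$ into $x$. Concretely, since $xg(a)^{-1}$ again commutes with $a$, cyclicity gives $\trA\bigl(xD(f(a))g(a)^{-1}\bigr)=\trA\bigl((xg(a)^{-1})D(f(a))\bigr)=\trA\bigl(x g(a)^{-1}f'(a)\,Da\bigr)$; similarly the second term becomes $\trA\bigl(x f(a)g(a)^{-2}g'(a)\,Da\bigr)$. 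Subtracting and pulling out the common $Da$ on the right, the bracket is exactly $f'(a)g(a)^{-1}-f(a)g'(a)g(a)^{-2}=(f/g)'(a)$ by the quotient rule (well-defined by the remark preceding the Lemma on insertion of rational functions). This is the claimed identity.

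The only real subtlety, and the reason the statement is not immediate, is that $Da$ need not commute with $a$, so one cannot ``differentiate formally'' inside the trace. The entire argument hinges on combining the cyclic property of $\trA$ with the hypothesis $[a,x]=0$: cyclicity lets me move powers of $a$ around, and the commutation lets me recombine them, which is what makes the telescoping Leibniz sum collapse and what allows the harmless factors $f(a),g(a)^{-1}$ to be shifted into $x$ without disturbing the commutation with $a$.
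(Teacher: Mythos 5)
Your proof is correct and follows essentially the same route as the paper's: Leibniz rule plus cyclicity and $[a,x]=0$ to collapse the telescoping sum for monomials, linearity for polynomials, then $D\bigl(g(a)^{-1}\bigr)=-g(a)^{-1}D(g(a))g(a)^{-1}$ and absorption of the $a$-commuting factors $g(a)^{-1}$, $f(a)g(a)^{-2}$ into $x$ to reduce to the polynomial case. Your derivation of the ``in particular'' statement by taking $x=1$ and invoking the commuting square of Lemma~\ref{lem:alg1} is exactly what the paper leaves implicit in ``The second claim is now clear.''
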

\begin{proof}
For a monomial $a^n$ we have
\[
     x\cdot D(a^n) = \sum_{j=0}^{n-1} x\cdot a^j\cdot (Da)\cdot a^{n-1-j},
\]
thus since $x$ commutes with $a$
\[
   \trA\bl x\cdot D(a^n) \br = \trA \bl x\cdot n a^{n-1}\cdot Da\br,
\]   
thus the first claim for monomials. By linearity it follows for
polynomials $f(t)\in\K[t]$. For $g(t)\in\K[t]$ with $g(a)$ invertible
we infer
\footnote{The Leibniz-rule implies $D(1_\cA) = D(1_\cA) + D(1_\cA)=0$.}
from applying $D$ to $1_\cA = g(a) \cdot g(a)\ii$ that 
$D\bl g(a)\ii\br = - g(a)\ii \cdot (D g(a) ) \cdot g(a)\ii$
hence
\[
\begin{split}
    \trA\Bl &x\cdot D\bl \frac fg(a) \br \Br \\
       &= \trA\Bl x\cdot \Bl D(f(a))\cdot g(a)\ii 
                   - f(a)g(a)\ii \cdot D(g(a))\cdot g(a)\ii \Br\Br \\
       &= \trA\Bl x \Bl g(a)\ii \cdot D(f(a)) 
           - f(a)g(a)^{-2} \cdot D(g(a)) \Br \Br.
\end{split}
\]
Since $xg(a)\ii$ and $x f(a) g(a)^{-2}$ commute with $a$ we may
apply the proven part to the polynomials $f(t),g(t)$ and obtain further
\[
\begin{split}
    \ldots & = \trA\Bl x\cdot \Bl g(a)\ii f'(a) - f(a) g(a)^{-2} g'(a)\Br 
                \cdot Da\Br \\
           & = \trA\Bl x\cdot \bl \frac fg\br'(a)\cdot Da \Br.
\end{split}
\]
The second claim now follows with $x=1_\cA$ together with the commutative
diagram in Lemma \ref{lem:alg1}.
\end{proof}

\subsection{Application to the product formula}
\label{ss:APF}
We now apply the previous considerations to the free unital polynomial
$\K$-algebra $\cA=\PFree{\K}{a,b}$ on two noncommuting variables $a,b$. Let
further $t$ be an indeterminate commuting with $a,b$ and consider the
polynomial algebra $\cA[t]=\PFree{\K}{a,b}[t]$. On $\cA[t]$ we have the
derivation $D:=\frac{d}{dt}$ sending $t^k$ to $k t^{k-1}$. Furthermore, we
fix $m\in\N$, $m\ge 1$. Note that for $x\in\cA$ the polynomial $1+x t$ is
invertible in the quotient $\cA[t]/(t^m)$ with inverse
\footnote{Alternatively, one could work in the formal power series
in $t$ with coefficients in $\cA$
where $1+x t$ is invertible on the nose, cf. Part \ref{part2}. }

\[
    (1+ x t +\cO(t^m))\ii  = \sum_{j=0}^{m-1} (-1)^j x^j t^j +\cO(t^m),
\]
where $\xi+\cO(t^m)$ stands for the class of $\xi\mod t^m$.

As before we denote by $\trCt:\cA[t] \to \Qcsp{\cA[t]}$ the quotient map. It
is elementary to check that the spaces $\Qcsp{\cA[t]}$ and $\Qcsp\cA[t]$ 
are isomorphic (as $\K$-vector spaces resp. $\ZA$-modules)
via the correspondence 
\begin{equation} \label{iso}
    \trCt(a_0+a_1t+...+a_nt^n)\leftrightarrow 
    \trC(a_0)+\trC(a_1)t+\ldots+\trC(a_n) t^n.
\end{equation}
Recall from Proposition \ref{p:2} and Section \ref{ss:RCP} the
definition of $X_m(ta,tb)$, $\Xtilde_m(ta,tb)$, and $Z_m(ta,tb)$. From the
isomorphism \eqref{iso} and the fact that $X_m(ta,tb)=\cO(t^m)$ the claim
\textbf{2.} in Theorem \ref{thm:main1} is equivalent to 
\begin{equation}\label{eq:equ1}
   \trCt\Bl \Xtilde_m(ta,tb) \Br = \cO(t^m),
\end{equation}
meaning that the left hand side is a polynomial in $t$ which is divisible
by $t^m$. Since $\Xtilde(ta,tb)$ has no constant term
\eqref{eq:equ1} is equivalent to
\begin{equation}\label{eq:equ2}
    \partial_t \trCt\Bl \Xtilde_m(ta,tb) \Br = \cO(t^{m-1}).
\end{equation}
To see this we introduce the polynomial
\[
    f(x):=\sum_{j=1}^{m-1}\frac{(-1)^j}{j}x^j \in \K[x].
\]
Clearly $\mod x^{m-1}$ we find
\[
    f'(x) +\cO(x^{m-1}) = -\sum_{j=0}^{m-2} (-x)^j +\cO(x^{m-1})
        = -\bl1+x+\cO(x^{m-1})\br\ii.
\]
By slight abuse of notation, to save space, we will write
$(1+x)\ii \mod x^{m-1}$ for $(1+x+\cO(x^{m-1}))\ii$.
With $f$ we have 
\[
      \Xtilde_m(ta,tb) = f( ta+tb+t^2ab) - f(ta) -f(tb),
\]
and applying Lemma \ref{lem:alg2} we find
\begin{multline}\label{eq:3}
    \partial_t \trCt\bl \Xtilde_m(ta,tb) \br \mod t^{m-1}\\
      =       \trCt\bl f'(ta+tb+t^2ab)(a+b+2tab)-f'(ta)a-f'(tb)b\br.
\end{multline}
We note that
\begin{align*}
    f'(ta+tb+t^2ab) \mod t^{m-1} & = - (1+tb)\ii(1+ta)\ii \mod t^{m-1}\\
    a+b+2tab        & = (1+ta)b+a(1+tb),\\
    f'(ta) a  \mod t^{m-1}  & = -(1+ta)\ii a \mod t^{m-1},\\      
    f'(tb) b  \mod t^{m-1}  & = -(1+tb)\ii b \mod t^{m-1}.      
\end{align*}
Plugging this into the previous expression \eqref{eq:3} and exploiting that
$\trCt$ vanishes on commutators implies 
$\partial_t \trCt\bl\Xtilde_m(ta,tb) \br \mod t^{m-1}=0$ and hence the claim.

\subsection{Comparison to \britz}
\label{ss:CB}
We briefly discuss how the decisive combinatorial Lemma \cite[Lemma
2.3]{BCG21}, which a priori seems more general than Theorem \ref{thm:main1}
2., can be obtained quite easily from our method as well.

Denote by $\Pi_j$ the set of partitions $\pi=(\pi_1,\pi_2,\pi_3)$ of the
set $\{1,\ldots,j\}$ into three subsets and by $\Pi_{j,k_1,k_2}$ the set of
those $\pi=(\pi_1,\pi_2,\pi_3)\in\Pi_j$ with $|\pi_1|+|\pi_3|=k_1$ and
$|\pi_2|+|\pi_3|=k_2$. Furthermore put
\begin{equation}\label{eq:C1}
    z_{\pi,k}(a,b):=\begin{cases} a,  & k\in \pi_1, \\
                                  b,  & k\in \pi_2, \\
                                 ab,  & k\in \pi_3,
           \end{cases},\quad 
             z_\pi(a,b):=\prod_{k=1}^j z_{\pi,k}(a,b).
\end{equation}
$\pi_3$ plays the role of $\xi$ in \eqref{eq:I6}. With this
notation one has
\[
      \yaj(a,b) = \sum_{\substack{
                       \pi\in\Pi_j\\
                       \pi_3 = \xi}} z_\pi(a,b)
\]
and further, see \eqref{eq:I13}
\begin{equation}
\begin{split}
    Z_m(a,b) &= \sum_{j=1}^{m-1}\frac{(-1)^j}{j}\Bl
       \sum_{\substack{
               \pi\in\Pi_j\\
                j+|\pi_3|< m}
            }
          z_\pi(a,b)\, -a^j -b^j\Br,\\
    & = \sum_{\substack{
         k_1,k_2> 0\\ k_1+k_2 <m}}
         \underbrace{
         \sum_{j=1}^{k_1+k_2} \frac{(-1)^j}{j}
             \sum_{\pi\in\Pi_{j,k_1,k_2}}
                 z_\pi(a,b)
        }_{=: z_{k_1,k_2}(a,b)}.
    \end{split}
\end{equation}
Note that
\[
z_{0,0}(a,b) = 0,\quad
z_{0,k_2}(a,b)= \frac{(-b)^{k_2}}{k_2}, k_2>0,
\quad
z_{k_1,0}(a,b)= \frac{(-a)^{k_1}}{k_1}, k_1>0.
\]
This explains the conditions $k_1,k_2>0$ in the last formula.

\subsection{}\label{ss:3.4}
Lemma \cite[Lemma 2.3]{BCG21} now states that 
$z_{k_1,k_2}$ is a sum of commutators for $k_1,k_2\geq 1$.
This is seemingly stronger than the corresponding statement for
$Z_m(a,b)$. However, introducing the commuting indeterminates $s,t$
commuting with $a,b$ we have in the algebra $\PFree{\K}{a,b}[s,t]$
\[
Z_m(sa,tb) = 
    \sum_{\substack{
         k_1,k_2> 0\\ k_1+k_2 <m}
         }
        z_{k_1,k_2}(a,b) \cdot s^{k_1} t^{k_2}
\]
and hence by the proven Theorem \ref{thm:main1}
(inserting $s a$ for $a$ and $tb$ for $b$)
and by \eqref{iso}
\[
\begin{split}
   0 &= \tr_{\PFree{\K}{a,b}[s,t]}\Bl Z_m(sa,tb) \Br\\
     & = \sum_{\substack{
         k_1,k_2> 0\\ k_1+k_2 <m}
         }
    \tr_{\PFree{\K}{a,b}}\bl z_{k_1,k_2}(a,b)\br
        \cdot s^{k_1} t^{k_2}
\end{split}
\]
and hence all $\tr_{\PFree{\K}{a,b}}\bl z_{k_1,k_2}(a,b)\br$
must vanish for $k_1,k_2>0, k_1+k_2<m$.

\clearpage
\part{Generalization to the multi-factor case}\label{part2}
\begin{msummary}
Here we present the multi-factor case. At the same time
we streamline further the algebraic as well as the combinatorial
aspects of the problem. 
\end{msummary}
\section{Algebraic and combinatorial preparations}
\label{s:ACP}

\renewcommand{\thesubsubsection}{\arabic{subsubsection}}
\subsection{The free polynomial algebra in $r$ noncommuting variables}
\label{ss:FPA}
 
We discuss here results analogous to Section \ref{ss:ECA} in the context
of formal power series. For general facts on formal power series, see
\cite[IV.9]{Lan02}.

Let $\K$ be a field of characteristic $0$ and let $\KFree:=\KFreeExpanded$
be the free polynomial algebra in the $r$ noncommuting indeterminates $\avec$.
The monomials are words in the alphabet $\{\avec\}$, i.e. a priori there
are no relations between the $\avec$.

As in Section \ref{ss:ECA} $\QA$ denotes the quotient of $\cA$ by the
commutator subspace $\CA=[\cA,\cA]$.

In the sequel we will need to adjoin \emph{central} indeterminates.  A
single one will usually be denoted by $x$. The letter $t$ is reserved for
an $r$-tuple $(\tvec)$ of indeterminates. In order to avoid unnecessary
repetitions, we will mostly work with $t$ with the understanding that the
results also hold for a single indeterminate $x$.
For $t$ we will also make use of multiindex notation,
$t^\ga$ then stands for $t^{\ga_1}\cldots t_r^{\ga_r}$; in this
context multiindices will be denoted by greek letters.

Let then $t:=(\tvec)$ be \emph{central} indeterminates. I.e. the
$\tvec$ commute and the $\tvec$ commute with the $\avec$.
$\KFree[\tvec]=:\KFree[t]$ then denotes the usual polynomial algebra over
$\KFree$ obtained by adjoining $t=(\tvec)$. 

Similarly, $\KFormal \tvec=:\Att$ denote the formal power series in
$\tvec$ over $\KFree$, i.e. formal sums of the form
$\sum\limits_{\ga\in\N^r}f_\ga\cdot t^\ga$ with $f=g$ iff
all coefficients $f_\ga=g_\ga$ are equal. $\Att$ is an algebra
with Cauchy-product
\begin{equation}\label{eq:Cauchy}
   f(t) \cdot g(t) = \sum_{\ga} \Bl \sum_{\gb\le \ga} f_\gb \cdot 
       g_{\gb-\ga} \Br \cdot t^\gamma,
\end{equation}
where $\gb\le \ga$ means $\ga-\gb\in\N^r$.
The center $\zen{\Bl \KFormal t \Br}$ \emph{equals}
$\FPower{\zen\KFree} t$. The $\ZAtt$-modules $\QAtt$ are
defined accordingly. Indeed, in view of Lemma \ref{lem:alg1} 
the Cauchy-product \eqref{eq:Cauchy} of formal power series turns $\QAtt$ into
a $\ZAtt$-module.

\newcommand{\plj}[1][j]{\frac{\partial}{\partial t_{#1} }}
For any vector space $\cB$ ($\cB$ stands for $\cA, \ZA, \QA$ etc.)
with corresponding formal power series vector space $\FPower\cB t$ the formal
partial derivative $\plj$ acts as a linear map reducing the degree
by one. $\plj$ satisfies the Leibniz-rule $\plj(f\cdot g) =
(\plj(f))\cdot g + f\cdot \plj g$, whenever the product $f\cdot g$
makes sense (e.g. when $\cB$ is an algebra, or for $f\in\ZAtt$ and
$g\in\QAtt$). 

Furthermore, we denote by $\FPower{\cB_N}t$ the set of those
$f=\sum\limits_\ga f_\ga\cdot t^\ga\in\FPower{\cB} t$ 
with $f_\ga=0$ for $|\ga|<N$, i.e. $f$ takes the form
$
     f = \sum\limits_{|\ga|\ge N} f_\ga \cdot t^\ga.
$

Clearly, $\Attn N$ is an ideal in $\Att$ and more generally
\[
     \Attn N \cdot \Attn M \subset \Attn {N+M},\quad
     \ZAttn N \cdot \QAttn M \subset \QAttn {N+M},
\]
in particular $\QAttn N$ is a $\ZAtt$-module.

Recall from Section \ref{ss:ECA} the facts about the trace map $\trA$.  In
contrast to \eqref{iso}, we may not expect $\Qcs{\bl \Att \br }$ to be
isomorphic to $\QAtt$. We circumvent this by working solely with $\QAtt$,
the formal power series over the quotient $\QA$, and define the map
\begin{equation}\label{sigma-map}
    \trAt: \Att \longrightarrow \QAtt, \sum_\ga a_\ga\cdot t^\ga \mapsto \sum_\ga
    \trA(a_\ga)\cdot t^\ga.
\end{equation}

\begin{lemma}\label{lem:alg3}
\simpleenum{1.} $\trAt$ is a well-defined, surjective $\ZAtt$-module
map.

\simpleenum{2.} $\trAt$ satisfies $\trAt(f\cdot g) = \trAt(g\cdot f)$
for $f,g\in\Att$.

\simpleenum{3.} The kernel of $\trAt$ equals $\CAtt$, that is if
$f=\sum\limits_\ga f_\ga t^\ga \in\Att$ with $\trAt(f)=0$ then
each $f_\ga \in\CA$ is a sum of commutators.

\simpleenum{4.} The formal partial derivative $\plj$
commutes with $\trAt$, i.e. the following diagram commutes
\[
\begin{tikzcd}
     \Att \arrow[d,"\trAt"]\arrow[r, "\plj"] & \cA\arrow[d,"\trAt"]\\
     \QAtt \arrow[r,"\plj"]        & \QAtt.
\end{tikzcd}
\]

\end{lemma}
\begin{proof} \textbf{1.} is straightforward and \textbf{2.}
follows by applying $\trA( a\cdot b) = \trA( b\cdot a)$ term by term.
\simpleenum{3.} is equally obvious as $\trAt(f)=0$ implies that 
$\trA( f_\ga ) =0$ for all $\ga$ and hence the claim is just a reformulation
of the latter.

Finally, \textbf{4. } follows from a term by term check.
\end{proof}

We now have the following formal power series analogue of Lemma
\ref{lem:alg2}:

\begin{lemma}\label{lem:alg4}  Let $f\in\FPower \K x$ and let 
$g\in\Attn 1$ be without constant term. Denote
by $f(g(t))\in\Att$ the element obtained by inserting $g$ into $f$,
which is well-defined thanks to $g(0)=0$. Then we have for the
formal partial derivative $\plj$ and the ``trace'' $\trAt$
\[
   \plj\trAt\Bl f\bl g(t)\br \Br = 
       \trAt\Bl f'\bl g(t) \br \cdot \plj g(t)  \Br.
\]
\end{lemma}
\begin{proof} \simpleenum{1. } Note first, that the claim is true
if $f(x) = x^n$ is a monomial. This follows already from Lemma
\ref{lem:alg2} applied with the $\cA$ there being $\Att$,
the $D$ there being $\plj$, and the $a$ there being $g(t)$;
alternatively
\[
  \plj g(t)^n = \sum_{j=0}^{n-1} g(t)^{j}\cdot \bl \plj g(t) \br\cdot g(t)^{n-1-j},
\]
hence after taking $\trAt$, and using Lemma \ref{lem:alg3},
\[
   \plj \trAt\Bl g(t)^n \Br = \trAt\Bl n\cdot g(t)^{n-1}\cdot \plj g(t) \Br
           =\trAt\Bl f'\bl g(t) \br \cdot \plj g(t) \Br
\]
in this case.

\simpleenum{2. } Given a general $f\in\FPower \K x$ write for arbitrary $N$
\[
   f(x) = \sum_{n=0}^N f_n \cdot x^n + x^{N+1}\cdot R_N(x)
\]
with $R_N\in\FPower \K x$. Then
\[
    f\bl g(t) \br = \sum_{n=0}^N f_n \cdot g(t)^n + g(t)^{N+1}\cdot R_N\bl
    g(t)\br.
\]
The remainder $g(t)^{N+1}\cdot R_N\bl g(t) \br$ lies in the ideal $\Attn {N+1}$
and hence we have by the first part of this proof
\[
\begin{split}
   \plj\trAt\Bl f\bl g(t) \br \Br 
        &= \sum_{n=0}^N n\cdot f_n\cdot \trAt\Bl g(t)^{n-1} \cdot \plj g(t) \Br \mod \QAttn {N} \\
        &= \trAt\Bl f'\bl g(t) \br \cdot \plj g(t) \Br \mod \QAttn {N}.
\end{split}
\]
Since $N$ is arbitrary and since $\bigcap\limits_N \QAttn{N}=\{0\}$ the claim follows.
\end{proof}

\subsection{Combinatorial preparations}
\subsubsection{} Recall from \ref{ss:Not}  that for noncommuting variables,
product notation means that the product is taken in the given order, 
i.e.  $\prod_{l=1}^N x_j = x_1\cldots x_N$ in this order. 

For a subset $\xi\subset\setr$ with
$\xi = \{l_1,\ldots,l_p\}$, $l_1<l_2<\ldots<l_p$, put
\begin{equation}\label{eq:41}
        z_\xi(\avec):= \prod_{k=1}^p a_{l_k},\quad
        z_{\emptyset}(\avec) := 1.
\end{equation}
With this notation we have
 $\displaystyle\mpi l r (1+a_l) = \sum\limits_{\xi\subset\setr} z_\xi$.

\subsubsection{} Denote by $\mfMjr$ the set of maps
$f:\setj \to \mfP(\setr)\setminus \{\emptyset\}$, where
$\mfP(\setr)$ denotes the power set of $\setr$. For
$f\in\mfMjr$ let
\begin{equation*}
     z_f(\avec) := \mpi k j z_{f(k)}.
\end{equation*}
Let us compare this to the $z_\pi$ of Section \ref{ss:CB} above.
There $r=2, a_1=a, a_2=b$ and to a partition
$\pi=(\pi_1,\pi_2,\pi_3)$ of the set $\{1,\ldots,j\}$
there corresponds a map $f:\{1,\ldots,j\}\to \{\{1\},\{2\},\{1,2\}\}$,
where 
\begin{equation*}
    f(k):=\begin{cases} \{1\},  & k\in \pi_1, \\
                        \{2\},  & k\in \pi_2, \\
                        \{1,2\},  & k\in \pi_3,
           \end{cases},\quad
    z_{f(k)} = \begin{cases} a, & f(k) = \{1\},\\
                             b, & f(k) = \{2\},\\
                             ab, & f(k) = \{1,2\}.
            \end{cases}
\end{equation*}
Consequently $z_\pi(a,b) = z_f(a_1,a_2)$. With this notation we now
have
\begin{equation}\label{eq:4.2}
    \Bl\prod_{l=1}^r (1+a_l) - 1 \Br^j 
  = \Bl \sum_{\emptyset\not=\xi\subset \setr} z_\xi(\avec) \Br^j
  = \sum_{ f \in\mfMjr} z_f( \avec ).
\end{equation}

\subsubsection{Orders} For a monomial $w$ (word) in the
algebra $\KFree$ we denote by $L(w)$ its \emph{length} or
\emph{order}, explicitly $w = x_1\cldots x_{L(w)}$ where
$x_l\in\{\avec\}$. Given $f\in \mfMjr$ we have 
$L(f(k)) = |f(k)| = \operatorname{cardinality}(f(k))$
and 
\begin{equation*}
   \begin{split}
    L(z_f(\avec ))& = \sum_{k=1}^j L(z_{f(k)}) = 
                           \sum_{k=1}^j | f(k) | .
    \end{split}
\end{equation*}
Note that for $f$ one clearly has 
$ j= \sum\limits_{\xi\subset\setr} |f\ii(\{\xi\})|$.

\subsubsection{} Now we define the analogues of the $z_{k_1,k_2}(a,b)$
of Section \ref{ss:CB} in the multi-factor case. For a multiindex
$\ga = (\ga_1,\ldots,\ga_r)\in \N^r$ \footnote{cf. Section
\ref{ss:Not}, recall $0\in\N$ and $|\ga|=\ga_1+\ldots+\ga_r$} 
and $f\in \mfMjr$ let $\ga(f,l)$ be the number of $a_l$ occuring
in the word $z_f(\avec)$, i.e. 
\begin{equation}\label{eq:43}
    \ga(f,l):= \sum_{l\in\xi\subset\setr} |f\ii(\xi)|
       = \sum_{\substack{k=1\\l\in f(k)}}^j 1
       = | \setdef{k}{l\in f(k)}| \le j,
\end{equation}
and further $\ga(f):=(\ga(f,1),\ldots,\ga(f,r))$.

For $f\in\mfMjr$ we have from 4. and \eqref{eq:43}
\begin{equation}\label{eq:44}
    j\cdot r \ge \ga(f,1)+\ldots+\ga(f,r) 
       = \sum_{l=1}^r \sum_{l\in\xi\subset\setr} |f\ii(\{\xi\})|
       \ge \sum_{\xi\subset\setr}|f\ii(\{\xi\})| = j.
\end{equation}

\subsection{The correction term in the multi-factor product formula}

Now we are ready to define the multivariate analogues
of the polynomials $X_m(a,b)$, $\Xtilde(a,b)$, and $Z_m(a,b)$, cf.
\eqref{eq:I8}, \eqref{eq:I10}, and \eqref{eq:I13}. Namely, put
for $a=(\avec)$
\begin{equation}\label{eq:4.5}
  \begin{split}
    \Xtmr a &= \sum_{j=1}^{m-1}\frac{(-1)^j}{j}
                  \biggl[ \Bl \mpi lr (1+a_l) - 1 \Br^j - \msum l r a_l^j
                  \biggr] \\
      &= \msum j {m-1} \frac{(-1)^j}{j}
             \biggl[   \sum_{f\in\mfMjr} z_f(a) - \msum l r a_l^j\biggr] \\
      &= \sum_{j=1}^{m-1}\frac{(-1)^j}{j}
        \sum_{ \ga\in\Nrs  }
           \sum_{\substack{ f \in \mfMjr\\ \ga(f) = \ga } } z_f(a).
  \end{split}
\end{equation}
Here, 
\begin{equation}\label{eq:Nrs}
     \Nrs := \N^r\setminus \bigsetdef{\ga\in\N^r}{ \exists_l \ga_l = |\ga| }.
\end{equation}
In words, tuples of the form $(0,\ldots,0,|\ga|,0,\ldots,0)$
are excluded from $\Nrs$. The reason for this are the easy to
verify formulas 
\begin{equation}
     \begin{split}
     z_{\ga=(0,\ldots,0,j,0,\ldots,0)} &= \frac{(-1)^{j-1}}{j} a_l^j,\\
     z_{(0,\ldots,0)}(a) &= 0,
     \end{split}
\end{equation}
where $\ga=(0,\ldots,0,j,0,\ldots,0) \in\N^r\setminus\Nrs, j>0$ 
($j$ in the $l$-th slot).

Furthermore, from the second to the third line in \eqref{eq:4.5}
we have used \eqref{eq:4.2} and the 
disjoint decomposition 
\[
    \mfMjr = 
   \bigcup\limits_{\ga\in N^r} \bigsetdef{f\in\mfMjr}{\ga(f) = \ga}.
\]

$\Xmr a$ is now obtained from $\Xtmr a$ by removing all summands
of order $<m$:
\begin{equation}\label{eq:4.6}
    \Xmr a := \sum_{j=1}^{m-1}\frac{(-1)^j}{j}
            \sum_{\ga\in\Nrs }
           \sum_{\substack{ f \in \mfMjr\\ \ga(f) = \ga } } z_f(a)
\end{equation}
and finally, cf. \eqref{eq:I13}
\newcommand{\sumalphaf}{\sum_{\substack{ f \in \mfMjr\\ \ga(f) = \ga} } }
\newcommand{\sumalpha}{\sum_{\substack{ \ga\in\Nrs \\ |\ga|<m}}}
\begin{equation}\label{eq:4.7}
\begin{split}
      Z_{m,r}(a) & = \Xtmr{a} - \Xmr{ a } = \sum_{j=1}^{m-1}\lncfpl
            \sumalpha\sumalphaf z_f(a) \\
         &= \sumalpha \msum j {|\ga|}\lncfpl
            \sumalphaf  z_f(a) 
         =: \sumalpha   z_\ga(a),
\end{split}
\end{equation}
where
\begin{equation}\label{eq:4.8}
        z_\ga(a) =  \sum_{j=1}^{|\ga|} \lncfpl  \sumalphaf z_f(a).
\end{equation}
The upper limit $|\ga|$ in the inner sum follows from \eqref{eq:44}.

\subsection{The $\log$ of the regularized determinant as
formal power series}

We return to formal power series: 
for $b\in\KFree$ and a formal power series indeterminate $z$,
cf. Section \ref{ss:FPA}, let
\begin{equation}\label{eq:4.9}
     \log (1+ z\cdot b) = \sum_{j=1}^\infty \frac{(-1)^{j-1}}{j} b^j \cdot z^j
\end{equation}
resp. in analogy to \eqref{eq:I4}, \eqref{eq:2.2} let 
\begin{equation}\label{eq:4.9.1}
    \begin{split}
    \logm(1+z\cdot b)&:= \log(1+z\cdot b) - \msum j{m-1} 
                      \lncf b^j\cdot  z^j\\
        & = \msum[m] j \infty \lncf b^j\cdot z^j,
    \end{split}
\end{equation}
as elements of the formal power series algebra $\KFormal z$.
Finally, taking the ``trace`` map $\trAt[z]$
\newcommand{\ldm}[1][\cA]{\operatorname{logdet}\limits_m^{#1}}
\newcommand{\ld}[1][\cA]{\operatorname{logdet}\limits^{#1}}
\begin{equation}\label{eq:4.9.2}
\begin{split}
    \ldm(1 + z\cdot b) &:= \trAt[z]\bl \logm(1+z\cdot b) \br \\
    \ld(1+z\cdot b) &:=\trAt[z]{\bl \log(1+z\cdot b) \br = 
    \operatorname{logdet}\limits_1^{\cA}}(1+z\cdot b).
\end{split}
\end{equation}
as an element of $\QAtt[z]$. $\ldm$ is the formal
power series analogue of the function $\log\detm( 1 + z\cdot A)$
in the operator case and with obvious identifications, 
for $A\in\BH m$ the function
$\log\detm( 1+ z\cdot A ) $ is obtained by inserting
$A$ into $\ldm( 1+ z\cdot a ) $ for $a$ and taking the Hilbert space
trace, cf. the commutative diagram \eqref{eq:3.1}. In sum, the
formal power series \eqref{eq:4.9}, \eqref{eq:4.9.1} allow for
a completely algebraic treatment of the product formula for regularized
Fredholm determinants. The next result is the formal power series analogue
of the product formula for regularized determinants.

\clearpage 
\begin{prop}\label{l:log-functional-equation}
In the formal power series algebra $\KFormal t$ resp.
in $\QAtt$ we have the following identities:
\begin{thmenum} 
\item
\begin{equation}\label{eq:4.10}
     \log \Bl \mpi l r (1 + t_l \cdot a_l) \Br 
      =  \sum_{\ga\in\N^r} z_\ga(a)\cdot t^\ga,
\end{equation}
cf.  \eqref{eq:4.8}.
\item
\begin{equation}\label{eq:411}
    \ld\Bl \mpi l r (1 + t_l \cdot a_l ) \Br 
    = \msum l r \ld ( 1 + t_l\cdot a_l )
\end{equation}
in the formal power series $\QAtt$,
\item
\begin{equation}\label{eq:4.14}
  \begin{split}
    &\ldm\Bl \mpi l r (1 + t_l \cdot a_l ) \Br  - 
                \msum l r \ldm ( 1 + t_l\cdot a_l ) \\
       & = \trAt \bl \Xmrr( t_1\cdot a_1,\ldots,t_r\cdot a_r )\br
          +\trAt \bl \Zmrr( t_1\cdot a_1,\ldots,t_r\cdot a_r
          )\br.
  \end{split}
\end{equation}
\end{thmenum}
\end{prop}
\begin{proof}
\simpleenum{1. } Using \eqref{eq:4.2} and \eqref{eq:4.5}-\eqref{eq:4.8}
we find
\begin{equation}
\begin{split}
     \log &\Bl \mpi l r (1 + t_l \cdot a_l) \Br 
       =  \sum_{j=1}^\infty \lncf
             \Bl \mpi lr (1+ t_l\cdot a_l) - 1 \Br ^j \\
       & = \sum_{j=1}^\infty \lncf\sum_{f\in\mfMjr} 
                  z_f( t_1 a_1,\ldots,t_r a_r) \\
       & = \sum_{j=1}^\infty \lncf\sum_{\ga\in\N^r} 
       \sum_{\substack{f\in\mfMjr \\ \ga(f) = \ga}} z_f( a ) t^\ga 
           =  \sum_{\ga\in\N^r} z_\ga(a)\cdot t^\ga,
\end{split}
\end{equation}

\simpleenum{2. }
The left hand side of \eqref{eq:411} equals
\[
    \trAt  \Bl \log\Bl 1+\bl \mpi l r (1 + t_l \cdot a_l ) -1 \br \Br
    \Br.
\]
To take the formal partial derivative w.r.t. $t_j$ we may therefore apply
Lemma \ref{lem:alg4} with $f(z) = \log(1+ z)$ and
$g(t) = \mpi l r (1 + t_l \cdot a_l ) -1$. We have
\[
    \plj\Bl  \mpi l r (1 + t_l \cdot a_l ) -1 \Br
        = \prod\limits_{1\le l<j} (1 + t_l\cdot a_l) \cdot a_j\cdot
          \prod\limits_{j<l\le r} (1 + t_l\cdot a_l).
\]
and
\[
   f'(g(t)) = \prod\limits_{l=r}^1 (1+ t_l\cdot a_l)\ii,
\]
hence using the tracial property of $\trAt$, Lemma \plref{lem:alg3},
\[
   \trAt\Bl f'\bl g(t) \br \cdot \plj g(t)  \Br
       = \trAt\Bl (1+ t_j\cdot a_j)\ii \cdot a_j\Br.
\]
On the other hand we find for the formal partial derivative w.r.t. $t_j$
of the right hand side of \eqref{eq:411}
\[
    \plj \trAt\Bl \log\bl 1  + t_j\cdot a_j\br \Br = 
       \trAt\Bl (1+ t_j\cdot a_j)\ii \cdot a_j\Br,
\]
again by invoking Lemma \ref{lem:alg4}. 

Hence the formal partial derivatives w.r.t. to $t_j, 1\le j\le r$,
of the left and right hand sides of \eqref{eq:411} coincide.
Since both sides have no constant term they must hence coincide as well.

\simpleenum{3. }
From the definition \eqref{eq:4.9.1}, the proven part (2) and
\eqref{eq:4.5} we see that the left hand side equals 
$\trAt \bl \Xtmrr( t_1\cdot a_1,\ldots,t_r\cdot a_r )\br$
and hence, by \eqref{eq:4.7} the claim.
\end{proof}

\section{Reduction to a combinatorial problem in the trace class case}
\label{s:RCPTCC}

Having defined the polynomials $\Xtmrr, \Xmrr, Z_{m,r}$ we now
proceed in parallel to the two-factor case and
reduce the proof of Theorem \ref{thm:main-mf} to a
purely combinatorial problem in the trace class case.

\subsection{First reduction to the trace class case}\label{ss:FRTCC}
The argument here is the same as in Section \ref{sss:1}. Namely,
since the monomials in $\Xmr\avec$ are at least of degree $m$ it
follows that $\Trb{\Xmr\Avec}$ is a continuous function
on the $r$-fold cartesian product $\bl\BH m\br^r$.
Consequently, both sides of the product formula 
\eqref{eq:main-mf} depend continuously on $\Avec$.
Since $\bone$ is dense in $\BH m$ it therefore suffices to
prove \eqref{eq:main-mf} for $\Avec\in\bone$ of trace class.

\subsection{Second reduction}\label{ss:SR}
In view of the algebraic and combinatorial preparations of Section
\ref{s:ACP} the second reduction in parallel to Section \ref{sss:2}
is now straightforward. 

Let $A_1,\ldots,A_r\in \bone$ be trace class operators. 
Then by the very definition of the higher
Fredholm determinant \eqref{eq:I4} we compute analogously to 
\eqref{eq:I9}
\begin{equation}\label{eq:5.1}
    \begin{split}
      \detm&\Bl \mpi l r (I + A_l) \Br \cdot \mpi lr \detm(I+A_l)\ii \\
           &     = \exp\Trb{ \Xtmr\Avec }, \\
           & = \exp\Trb{ \Xmr\Avec } \cdot \exp\Trb{ \Zmrr(\Avec) }.
    \end{split}
\end{equation}
with $\Xtmrr,\Xmrr, Z_{m,r}$ defined in \eqref{eq:4.5}, \eqref{eq:4.6},
\eqref{eq:4.7} resp. Cf. also \eqref{eq:4.14}


After these preparations the natural analogue of Theorem \ref{thm:main1}
is
\begin{theorem}\label{thm:main1mf}
\begin{thmenum}
\item For $\Avec\in\bone$ we have 
\begin{equation}\label{eq:I12mf}
\Tr\bl \Xmr\Avec \br = \Tr\bl \Xtmr\Avec\br.
\end{equation}
\item In the free polynomial algebra $\Qar$ generated by $r$ noncommuting
indeterminates $\avec$ the polynomial
\begin{equation}
   \Zmr\avec = \sum_{\substack{\ga\in\Nrs \\ |\ga|<m} } z_\ga(\avec)
\end{equation}
is a sum of commutators.

\item
For $\ga\in\N^r$ the polynomial $z_\ga(\avec)$ \eqref{eq:4.8}
is a homogeneous polynomial in $\avec$ of degree $|\ga|$.

For $\ga\in\Nrs$ the polynomial $z_\ga(\avec)$ is a sum
of commutators. 

\end{thmenum}
\end{theorem}

\begin{remark}
\simpleenum{1. } Clearly we have the implications 
$\text{(3)} \Rightarrow \text{(2)} \Rightarrow \text{(1)}$ 
of the Theorem.
\simpleenum{2. } \eqref{eq:5.1}, \eqref{eq:I12mf} imply
Theorem \ref{thm:main-mf}. Note that (1), (2) of Theorem
\ref{thm:main-mf} follow immediately from \eqref{eq:4.5}
and \eqref{eq:4.6}.
\simpleenum{3. } The argument in Section \ref{ss:3.4}
shows that (2) and (3) of the 
Theorem are in fact equivalent. Namely, replacing
$a_l$ by $t_l\cdot a_l$ and assuming (2)
we have
\[
   \Zmrr(t_1a_1,\ldots,t_r a_r) 
      = \sum_{\substack{ \ga\in\Nrs \\|\ga|<m}} z_\ga(a)\cdot t^\ga
\]
and thus
\[
    0 = \trAt{\bl \Zmrr(t_1a_1,\ldots,t_ra_r) \br}
      = \sum_{\substack{ \ga\in\Nrs \\|\ga|<m}}
           \trc{\cA}\bl z_\ga(a) \br\cdot t^\ga,
\]
and hence $\trc{\cA}\bl z_\ga(a) \br$ vanishes for 
$\ga\in\Nrs$. The formula also immediately implies that
$z_\ga$ is homogeneous of degree $|\ga|$.

\simpleenum{4. } Summing up to prove the main Theorem
\ref{thm:main-mf} it remains to prove (2) of
Theorem \ref{thm:main1mf}.
\end{remark}

\section{Two proofs of Theorem \ref{thm:main1mf} \textup{(2)}}
\label{s:TPT}

\subsection{Formal power series (analytic) proof} \label{ss:FPSP}

Here we argue along the pattern of Section \ref{s:APPF}, but
using formal power series instead of analytic functions.
Namely, looking at \eqref{eq:4.14} and inserting $x$ for each
$t_l$ the left hand side of \eqref{eq:4.14} is 
divisible by $x^m$ (i.e. $\cO(x^m)$). As $\Xmrr$ contains only
monomials of order $\ge m$ also 
$\Xmrr( x\cdot a_1,\ldots,x\cdot a_r )$
is divisible by $x^m$.
Thus the last summand
$\trAt[x] \bl \Zmrr( x\cdot a_1,\ldots,x\cdot a_r )\br$
on the right of \eqref{eq:4.14}
must be divisible by $x^m$ as well. However, since $\Zmrr$ is
of order $<m$, this summand is a polynomial in $x$ of degree
$<m$ and hence must vanish.

\subsection{Algebraic proof} \label{ss:AP}

Combining (1) and (2) of Proposition \ref{l:log-functional-equation}
gives, using the definition \eqref{sigma-map},
\[
\begin{split}
      \sum_{\ga\in\N^r}  \trA\bl z_\ga(a) \br t^\ga 
      & = \sum_{l=1}^r \trAt[t] \Bl  \log (1+ t_l a_l ) \Br\\
      & = \sum_{j=1}^\infty \frac{(-1)^{j-1}}{j} \sum_{l=1}^r \trA( a_l^j )
      \cdot t^j_l
\end{split}
\]
from which the claim follows by comparing the coefficients
of $t^\ga$.

\bibliographystyle{amsalpha-lmp}
\bibliography{localbib,mlbib}

	
\end{document}